\font\sixbb=msbm6
\font\eightbb=msbm8
\font\twelvebb=msbm10 scaled 1095
\newtheorem{theorem}{\bf Theorem}[section]
\newtheorem{lemma}[theorem]{\bf Lemma}
\newtheorem{conjecture}[theorem]{\bf Conjecture}
\title{An upper bound on the number of Steiner triple systems}
\begin{document}
\author{Nathan Linial\thanks{Department of Computer Science, Hebrew University, Jerusalem 91904,
    Israel. e-mail: nati@cs.huji.ac.il~. Supported by ISF and BSF grants.}
  \and {Zur Luria\thanks{Department of Computer Science, Hebrew University, Jerusalem 91904,
    Israel. e-mail: zluria@cs.huji.ac.il~.}}
}

\date{}

\maketitle
\pagestyle{plain}

\begin{abstract}
Let $STS(n)$ denote the number of Steiner triple systems on $n$ vertices. 
Our main result is the following upper bound.
$$ STS(n) \leq \left( (1+o(1))\frac{n}{e^2} \right)^{\frac{n^2}{6}}.$$
The proof is based on the entropy method.

As a prelude to this proof we consider the number
$F(n)$ of $1$-factorizations of the complete graph on $n$ vertices.
Using the Kahn-Lov\'{a}sz theorem it can be shown that
$$ F(n) \leq \left( (1+o(1))\frac{n}{e^2} \right)^{\frac{n^2}{2}}.$$
We show how to derive this bound using the entropy method.
Both bounds are conjectured to be sharp.

\end{abstract}

\section{Introduction}
A Steiner triple system on a vertex set $V$ is a collection of triples $T \subseteq \binom{V}{3}$ such that each
pair of vertices is contained in exactly one triple from $T$. It is well known that a Steiner triple system
(STS) on $n \ge 1$ vertices exists
if and only if $n \equiv 1$ or $3$ (mod 6).

A 1-factorization of the complete graph on $n$ vertices $K_n$ is a partition of the edges of $K_n$ into $n-1$ perfect
matchings, or in other words, a proper edge coloring of $K_n$ using $n-1$ colors.
It is well known that a 1-factorization of $K_n$ exists if and only if $n$ is even.

It has been observed (e.g., \cite{Ca09}) that 1-factorizations and Steiner triple systems are
special types of Latin squares. We view a Latin square as an $n \times n \times n$ 
array $A$ with  $0-1$ entries in which each \textit{line}
has exactly one element that equals $1$. To see that this description of Latin squares is equivalent
to the usual definition, we associate to the array $A$ a matrix $L$, that is defined via $L(i,j)=k$
where $k$ is the unique index for which $A(i,j,k)=1$.
A 1-factorization is a  Latin square $A$ such that
$A(i,j,k) = 1 \Leftrightarrow A(j,i,k) = 1$ and $A(i,i,n) = 1$ for all
$i$. Thus, $L$ is a symmetric matrix in which all diagonal terms equal $n$.
A Steiner triple system is a Latin square $A$ where $A(i,j,k) = 1$ implies that $A(\sigma(i),\sigma(j),\sigma(k))=1$
for every permutation $\sigma \in S_3$ on $i,j,k$, and $A(i,i,i)=1$ for all $i$. This can also be expressed
in terms of $L$, though it's a bit more complicated to formulate.

These relations suggest that there might be deeper analogies to reveal
among Latin squares, STS's and 1-factorizations. Indeed,
we have recently proved an asymptotic upper bound on the number of Latin hypercubes
\cite{LL11}, and here we prove analogous statements for $STS(n)$ and $F(n)$.

The best previously known estimates for the number of $n$-point Steiner triple systems
are due to Richard Wilson \cite{Wi74}.

$$
\left( \frac{n}{e^2 3^{3/2}} \right)^{\frac{n^2}{6}} \le STS(n) \le \left( \frac{n}{e^{1/2}} \right)^{\frac{n^2}{6}}.
$$ 

Wilson also conjectured that, in fact, $STS(n)= \left( (1 + o(1)) \frac{n}{e^2} \right)^{\frac{n^2}{6}}$.
We show that this is an upper bound on the number of Steiner triple systems.

\begin{theorem}
 \label{thm:steiner}
$$ STS(n) \leq \left( (1 + o(1)) \frac{n}{e^2} \right)^{\frac{n^2}{6}} .$$
\end{theorem}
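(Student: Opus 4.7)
The plan is to adapt the entropy-method proof from \cite{LL11} on Latin hypercubes to the STS setting. Let $T$ be a uniformly random Steiner triple system on $[n]$, so that $\log STS(n) = H(T)$. I will encode $T$ as a labeling $X : E(K_n) \to [n]$ where $X_{\{i,j\}}$ is the unique third vertex of the triple in $T$ that contains $\{i,j\}$; the family $(X_e)_{e \in E(K_n)}$ determines $T$. Fixing a uniformly random linear order $\sigma$ on $E(K_n)$ independent of $T$, the chain rule of entropy together with the bound $H(Y\mid Z) \leq \mathbb{E}_{Z}[\log |\mathrm{supp}(Y\mid Z)|]$ gives
$$H(T) \;\leq\; \mathbb{E}_{\sigma,T}\, \sum_{t=1}^{\binom{n}{2}} \log N_t ,$$
where $N_t$ is the number of candidate values for $X_{\sigma(t)}$ that are consistent with the values $X_{\sigma(s)}$ for $s<t$.

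The central combinatorial step is to bound $N_t$ in terms of the partial STS revealed. If $\sigma(t)=\{i,j\}$, a value $k$ is inconsistent precisely when some earlier revelation forces a different triple through $\{i,k\}$ or $\{j,k\}$. The key structural observation is that at every vertex $v$ the revealed edges at $v$, together with their labels, define a partial matching on $[n]\setminus\{v\}$ (corresponding to the involution on $[n]\setminus\{v\}$ induced by any completed STS at $v$). Writing $r_v(t)$ for the number of revealed edges incident to $v$ strictly before time $t$, this matching structure should produce an inequality of the form $N_t \leq n - 1 - \max(r_i(t),r_j(t))$, refined to $N_t \leq n - r_i(t) - r_j(t) + O(1)$ in the typical regime, by arguing that the $k$'s forbidden by $i$ and by $j$ rarely coincide.

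The final step is to take expectation over $\sigma$ and $T$, then convert the resulting double sum into an integral. For a fixed edge $e=\{i,j\}$ with random reveal time $t$, the expected fraction $r_i(t)/(n-2)$ and $r_j(t)/(n-2)$ are both $\approx t/\binom{n}{2}$, so after grouping edges by their position in $\sigma$ the sum $\mathbb{E}\sum_t \log N_t$ becomes (up to lower-order terms) $\binom{n}{2}\int_0^1 \log\bigl(n(1-s)^2\bigr)\,ds = \binom{n}{2}(\log n - 2)$. However, each triple $\tau \in T$ is represented by three edges, and once the first of them has been revealed the labels of the remaining two contribute $0$ entropy; averaging over $\sigma$ the ``nontrivial'' contribution comes from $1/3$ of the edges, yielding the correct prefactor $\frac{n^2}{6}(\log n - 2 + o(1))$ and hence the claimed bound.

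The main obstacle I expect is the combinatorial step bounding $N_t$ sharply enough so that the $-2$ in $\log n - 2$ (equivalently, the $1/e^2$ factor) survives. The naive bound $N_t \leq n$ produces only $\log n$ per edge, and even the single-endpoint bound $N_t \leq n - r_i(t)$ produces $\log n - 1$; getting the full $\log n - 2$ requires genuinely using that revealed information at \emph{both} endpoints $i$ and $j$ forbids essentially disjoint sets of candidate third vertices, which in turn rests on the global involutive constraint at each vertex of an STS. Making this precise while controlling the error terms from overlaps, and correctly exploiting the ``threefold overcounting'' of edges vs.\ triples, is the technical heart of the proof.
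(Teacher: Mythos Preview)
Your entropy framework is the right one, but the accounting in the final two paragraphs does not close. You invoke two separate savings --- the integral $\int_0^1\log\bigl(n(1-s)^2\bigr)\,ds=\log n-2$ over \emph{all} edge positions, and then a further factor $1/3$ because only the first edge of each triple contributes --- but these are not independent. If $N_t$ is defined so that $N_t=1$ whenever $\{i,j\}$ is not the first edge of its triple to appear, then any Jensen bound of the form $\mathbb{E}[\log N_t\mid s]\le\log\mathbb{E}[N_t\mid s]$ has already averaged in those zero contributions; summing gives at best $H(T)\le\binom{n}{2}(\log n-2+o(1))$, i.e.\ $(n/e^2)^{n^2/2}$, which is off by a factor $3$ in the exponent. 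Conversely, if you first condition on being first-in-triple, the position $s$ is no longer uniform (its density is $3(1-s)^2$), and the conditional size of the candidate set is no longer $n(1-s)^2$. Relatedly, the heuristic $N_t\le n-r_i(t)-r_j(t)$ gives $n(1-2s)$, not $n(1-s)^2$; it is linear in $s$ and negative past $s=1/2$, so it cannot be what produces the integrand you wrote.

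The paper gets the honest $-2$ by using a \emph{two-level} random ordering rather than a single uniform order on edges: first a random order $\ll$ on vertices, and then, for each $v$, a random order on the edges $\{v,u\}$ with $v\ll u$. For an ordered pair $(i,j)$ with $X_{i,j}=k$, one conditions on the event $F_{i,j}=\{i\ll j,k\text{ and }\{i,j\}\prec\{i,k\}\}$, which has probability exactly $1/6$ and accounts for the prefactor once and for all. Conditioned on $F_{i,j}$ and on the position $p$ of $i$ in $\ll$, one computes $\mathbb{E}[M_{i,j}\mid p]\approx(n-p)^3/n^2$ exactly (a candidate $t$ survives iff $t,X_{i,t},X_{j,t}$ all follow $i$), and Jensen against the conditional density of $p$ yields $\mathbb{E}[\log M_{i,j}]\le\log n-1+o(1)$. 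A second, structurally identical step --- conditioning on the position $q$ of $\{i,j\}$ among the edges in $E_i$, given $M_{i,j}=l$ --- gives $\mathbb{E}[\log N_{i,j}\mid M_{i,j}=l]\le\log l-1+o(1)$. The two $-1$'s thus come from two genuinely separate averagings, while the $1/6$ comes from $\Pr(F_{i,j})$; your single integral is trying to extract all three at once, which is why the bookkeeping breaks. The missing idea is this hierarchical ordering and the corresponding two-stage Jensen argument.
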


The Kahn-Lov\'{a}sz theorem considers a (not necessarily bipartite) graph with
degree sequence $r_1, ... ,r_n$. It shows that the number of perfect matchings
in such a graph is at most $ \prod_{i=1}^n{(r_i !)^{\frac{1}{2 r_i}}}$.
In particular a $d$-regular 
graph has at most $ (d!)^{\frac{n}{2 d}} $ perfect matchings. For a proof see Alon and 
Friedland~\cite{AF08}. These results are inspired by Br\'egman's proof~\cite{Br73} of Minc's conjecture
on the permanent. For a very recent proof of this result that uses the entropy method,
see~\cite{CR11}.

This theorem easily yields an upper bound on $F(n)$ as follows:
Choose first a perfect matching of $K_n$. The remaining edges constitute an $n-2$ regular graph in which we
again choose a 
perfect matching. We proceed to choose perfect matchings until we exhaust all of $E(K_n)$.
The theorem implies that we have at most $\left( (n-k) ! \right)^{\frac{n}{2 (n-k)}}$ choices for the $k$-th step,
so that $F(n) \le \prod_{d=1}^{n-1}{(d!)^{\frac{n}{2 d}}}$.
An application of Stirling's formula gives:
\begin{theorem}
\label{thm:factor}
$$ F(n) \leq \left( (1 + o(1)) \frac{n}{e^2} \right)^{\frac{n^2}{2}} .$$
\end{theorem}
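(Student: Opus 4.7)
The plan is to execute the iterative Kahn--Lov\'asz reduction already outlined in the lead-in and then to extract the constant $e^{-2}$ via a straightforward Stirling computation, tracking the error terms only well enough to absorb them into the $(1+o(1))$ factor.

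First I would formalize the reduction. A $1$-factorization of $K_n$ is built up by choosing, at stage $k=1,\dots,n-1$, a perfect matching of the $(n-k)$-regular graph that remains after the previous stages. The Kahn--Lov\'asz bound for a $d$-regular graph on $n$ vertices gives at most $(d!)^{n/(2d)}$ perfect matchings, so, writing $d=n-k$,
\[
F(n) \;\le\; \prod_{d=1}^{n-1} (d!)^{n/(2d)}.
\]
Taking logarithms reduces the task to showing
\[
\frac{n}{2}\sum_{d=1}^{n-1}\frac{\log(d!)}{d} \;=\; \frac{n^2}{2}\log\!\left(\frac{n}{e^2}\right) + o(n^2).
\]

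Next I would plug in Stirling in the form $\log(d!) = d\log d - d + O(\log d)$, which yields $\frac{\log(d!)}{d} = \log d - 1 + O(\log d / d)$. Summing from $d=1$ to $n-1$, the error contributes only $O(\log^2 n)$, and the main part is $\log((n-1)!) - (n-1)$. A second application of Stirling gives $\log((n-1)!) = n\log n - n + O(\log n)$, so the bracketed sum equals $n\log n - 2n + o(n)$. Multiplying by $n/2$ produces $\frac{n^2}{2}\log n - n^2 + o(n^2)$, which is precisely $\frac{n^2}{2}\log(n/e^2)$ up to an $o(n^2)$ additive error; exponentiating gives the multiplicative $(1+o(1))$ factor stated in the theorem.

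I do not anticipate a genuine obstacle here, since once Kahn--Lov\'asz is in hand the whole argument is elementary. The one place to be a little careful is that the $O(\log d / d)$ error term in $\log(d!)/d$, after being summed to $d=n-1$ and multiplied by the outer factor $n/2$, contributes $O(n\log^2 n)$, which is comfortably inside $o(n^2)$; any similarly crude form of Stirling would work just as well. The main qualitative point worth emphasizing is conceptual rather than technical: whereas Theorem~\ref{thm:steiner} requires the entropy machinery developed later in the paper, the factorization bound drops out mechanically from the Kahn--Lov\'asz theorem applied to a deterministic chain of perfect-matching choices.
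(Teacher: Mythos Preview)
Your argument is correct and is precisely the Kahn--Lov\'asz-plus-Stirling derivation that the paper itself sketches in the introduction immediately before stating the theorem. The paper's \emph{formal} proof, however, is the entropy argument of Section~2: one samples a uniformly random $1$-factorization $X$, reveals the edge colors $X_{i,j}$ in a random order $\prec$ built from a random vertex order $\ll$, and bounds $H(X)$ via the chain rule; two small averaging lemmas (Lemmas~\ref{dist_p} and~\ref{exp_M}) control $\mathbb{E}[\log M_{i,j}]$, and a further uniform-position argument over $E_i$ converts $\log M_{i,j}$ into $\log N_{i,j}$, yielding $H(X)\le\binom{n}{2}(\log n-2+o(1))$ without ever invoking Kahn--Lov\'asz. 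Your route is shorter once Kahn--Lov\'asz is taken as a black box; the paper's route is self-contained and, more to the point, is the template that transfers to Steiner triple systems in Section~3, where no ready-made analogue of Kahn--Lov\'asz exists---exactly the conceptual distinction you note in your final paragraph.
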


It is an interesting question to seek lower bounds to complement
these upper bounds.
We have already mentioned Wilson's lower bound on $STS(n)$.
Cameron gave a lower bound for $F(n)$ in~\cite{Ca76}. When done with care
this argument yields 
$$ F(n) \ge \left(\frac{(1+o(1))n}{4e^2}\right)^{\frac{n^2}{2}}.$$
For the sake of completeness we repeat his argument which starts with the
inequality $F(n) \ge L(\frac{n}{2}) (F(n/2))^2$, where $L(n)$ is the number of
order-$n$ Latin squares. This inequality is shown as follows: Partition the vertex set $[n]$
into two equal parts, and select an arbitrary $1$-factor on each. It is well-known
and easy to prove that a $1$-factorization of $K_{r,r}$ is equivalent to
an order-$r$ Latin square. It follows easily from the van-der-Waerden
conjecture that $L(n) \ge (\frac{(1+o(1))n}{e^2})^{n^2}$ (see~\cite{VL+W}).
The derivation of Cameron's lower bound is a simple matter now.
We note that this argument works when $n$ is divisible by $4$.
When $n = 4r+2$ some additional care is required.

\begin{conjecture}
$$STS(n) = \left( (1 + o(1)) \frac{n}{e^2} \right)^{\frac{n^2}{6}}$$.

$$F(n) = \left( (1 + o(1)) \frac{n}{e^2} \right)^{\frac{n^2}{2}}$$.
\end{conjecture}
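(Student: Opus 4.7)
The Conjecture asserts $STS(n)=((1+o(1))n/e^2)^{n^2/6}$ and $F(n)=((1+o(1))n/e^2)^{n^2/2}$. The upper bounds are the statements of Theorems~\ref{thm:steiner} and~\ref{thm:factor}, so what remains is to close the constant-factor gap inside the bases in the lower bounds of Wilson (factor $3^{3/2}$) and Cameron (factor $4$). I would attack both via iterative random-greedy constructions combined with permanent-type lower bounds on the number of valid extensions at each step.

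For $F(n)$, view a 1-factorization as an ordered $(n-1)$-tuple of pairwise disjoint perfect matchings of $K_n$, divided by $(n-1)!$. After $k$ matchings have been chosen, the residual graph is $(n-1-k)$-regular on $n$ vertices, and I would invoke a non-bipartite Schrijver-type lower bound of the form: every $r$-regular graph on $n$ vertices has at least $((1-o(1))r/e)^{n/2}$ perfect matchings, uniformly in the graph (such bounds are available via Gurvits/Csikv\'ari or an adaptation of Schrijver's bipartite result). The product $\prod_{r=1}^{n-1}(r/e)^{n/2} = ((n-1)!)^{n/2}/e^{n(n-1)/2}$ is, by Stirling, of order $(n/e^2)^{n(n-1)/2}$; dividing by the sub-leading $(n-1)!$ yields exactly the conjectured $((1-o(1))n/e^2)^{n^2/2}$. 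Cameron's factor-of-$4$ loss stems from fixing an initial bipartition of the vertex set, which the iterative approach avoids.

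For $STS(n)$ the analogous plan is a R\"odl-style nibble: construct the system by placing triples in small random batches, each time choosing uniformly among triples whose three pairs are all still uncovered. If the residual graph of uncovered pairs behaves pseudorandomly throughout, then at step $k$ the number of available triples is approximately $\binom{n}{3}(1-6k/n^2)^3$. Multiplying over $k \in \{0,\ldots,\binom{n}{2}/3\}$, dividing by the ordering overcount $(\binom{n}{2}/3)!$, and computing via $\int_0^{1/6}\log(1-6t)\,dt = -1/6$ together with Stirling gives precisely $((1-o(1))n/e^2)^{n^2/6}$.

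The principal obstacle is the $STS(n)$ lower bound. For $F(n)$, a uniform non-bipartite Schrijver-type bound plugs in mechanically and matches the conjectured constant exactly. For $STS(n)$, by contrast, standard nibble analyses lose control of codegrees once only $o(n^2)$ pairs remain uncovered, and this terminal regime is precisely where Wilson's $3^{3/2}$ factor is incurred. Pushing the nibble through essentially all $\binom{n}{2}/3$ triples, with concentration strong enough to pin the constant down to $e^2$, is the crux of the problem and is presumably why the conjecture remains open.
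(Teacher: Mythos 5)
The statement you are addressing is a \emph{conjecture}: the paper proves only the two upper bounds (Theorems \ref{thm:steiner} and \ref{thm:factor}) and records the best known lower bounds, which fall short by the constant factors $3^{3/2}$ (Wilson) and $4$ (Cameron) inside the base. There is no proof in the paper to compare against, and your proposal does not supply one either; it is a research outline whose two key steps each contain a genuine gap. For $F(n)$, the ``uniform non-bipartite Schrijver-type lower bound'' you invoke --- that \emph{every} $r$-regular graph on $n$ vertices has at least $((1-o(1))r/e)^{n/2}$ perfect matchings --- is false as stated: an $r$-regular graph need not contain even one perfect matching (take a disjoint union of copies of $K_{r+1}$ with $r$ even, or connected examples with many bridges), and even when perfect matchings exist their number can be far below $(r/e)^{n/2}$. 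The Kahn--Lov\'asz theorem used in the paper is a one-sided (upper) bound precisely because no such uniform lower bound holds. Consequently nothing ``plugs in mechanically'': to run your iteration you would have to show that the residual $(n-1-k)$-regular graphs can be kept pseudorandom (say, with a spectral gap) throughout the process and that a Schrijver-type lower bound holds for such graphs, which is a substantial piece of work you have not carried out.

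For $STS(n)$ your heuristic count is consistent with the conjectured answer (the computation $\frac{n^2}{6}\log\binom{n}{3}-\frac{n^2}{2}-\log\left(\left(\tfrac{n^2}{6}\right)!\right)+o(n^2)=\frac{n^2}{6}(\log n-2)+o(n^2)$ does come out right), but, as you yourself concede, the nibble analysis breaks down in the terminal regime where only $o(n^2)$ pairs remain uncovered, and that is exactly where the constant in the base is decided. Acknowledging the obstacle does not remove it. So the proposal should be read as a plausible plan of attack on an open problem, not as a proof; neither lower bound is established, and the first one rests on a lemma that is false in the generality in which you state it.
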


Our proofs are based on the entropy method, a useful tool for a variety of counting problems.
The basic idea is this: In order to estimate the size of a finite set $\cal F$, we introduce a random variable
$X$ that is uniformly distributed on the elements of $\cal F$.
Since  $H(X) = \log(|\cal{F}|)$, bounds on $H(X)$ readily translate into bounds on $|\cal{F}|$.
The bounds we derive on $H(X)$ are based on several elementary properties of the entropy function.
Namely, if a random variable takes
values in a finite set $S$ then its entropy does not exceed $\log |S|$ with equality iff the
distribution is uniform over $S$. Also, if $X$ can be expressed as $X=(Y_1,\ldots,Y_k)$,
then $H(X) = \sum_j H(Y_j|Y_1,\ldots,Y_{j-1})$. The expression $X=(Y_1,\ldots,Y_k)$ can be viewed 
as a way of gradually revealing the value of the random variable $X$. It is a key ingredient of our proofs to
randomly select the order $\prec$ in which the variables $Y_i$ are revealed and average over the resulting identities 
$H(X) = \sum_j H(Y_j|Y_i~\mbox{s.t.~}i \prec j)$.
Similar ideas can be found in the literature, but to the best of our knowledge
this method of proof is mostly due to Radhakrishnan \cite{Ra97}. 
We deviate somewhat from the standard notation
in that our logarithms are always natural, rather than binary. Formally, we should
use the notation $H_e$ for the entropy function, but to simplify matters, we stick to the standard notation $H(X)$.
We refer the reader to \cite{CT91} for a thorough discussion of entropy.
For an example of the entropy method, see
\cite{Ra97}.

In section 2, we give an entropy proof of theorem \ref{thm:factor}. Using similar methods, in section 3 we give 
an entropy proof of theorem \ref{thm:steiner}.

\section{An upper bound on 1-factorizations}
Let $n$ be an even integer, and let $X$ be a random, uniformly
chosen 1-factorization of $K_n$.
Define the random variable $X_{i,j}=X_{j,i}$ to be the color of the edge $\{i,j\}$ in $X$.
In order to analyze these random variables we first select a random ordering, denoted $\ll$, of the vertices.
Using the relation $\ll$ we introduce next a random ordering
$\prec$ of the edges as follows:
For each vertex $v$ we choose a random ordering of $E_v$, the set of edges $\{v,u\}$
where $v \ll u$. To define the ordering $\prec$,
we scan the vertices in the order $\ll$. For each vertex $v$, we
scan the edges $\{u,v\} \in E_v$ in their chosen order. Our proof proceeds by
successively revealing the colors of the edges, i.e.
the values taken by the variables $X_{i,j}$, where the edges are exposed in 
the order $\prec$. 

Given two vertices $i \neq j$, we are interested in the (random) number
of colors which are available
for the edge $\{i,j\}$, given the values taken by the $\prec$-preceding edges.
We are unable to determine this number exactly. Rather we define a random variable
$N_{i,j}$ that is an upper bound on this number.
If $j \ll i$, then the variable $X_{i,j}$ is determined by the preceding variable $X_{j,i}$,
so in this case it is natural to define $N_{i,j}=1$.
We proceed to the more interesting case where $i \ll j$. Here are two
reasons why some color may be unavailable for $X_{i,j}$.
For every vertex $t \ll i$ we already know the colors of the edges
$\{t,i\}$ and $\{t,j\}$, neither of which can be used for the edge $\{i,j\}$.
The set of colors that are ruled out for this reason is denoted $A_{i,j}$.
It is also possible that $i \ll k$ and $\{i,k\} \prec \{i,j\}$,
so that $\{i,j\}$ cannot take the color $X_{i,k}$. The set of such colors is denoted $B_{i,j}$.
Formally:

\begin{itemize}
 \item 
$A_{i,j}:= \{X_{t,i} | t \ll i\} \cup \{X_{t,j} | t \ll i\}$.
\item
$B_{i,j} := \{X_{i,k} | i \ll k \mbox{~and~}\{i,k\} \prec \{i,j\}\}$.
\end{itemize}

The set of colors that are not ruled out for the first reason is denoted:
$$ {\cal M}_{i,j} := [n-1] \smallsetminus A_{i,j}$$
and those remaining after further forbidding colors due to the second reason:
$$ {\cal N}_{i,j} := {\cal M}_{i,j} \smallsetminus B_{i,j} .$$

As mentioned, we seek to define a random variable $N_{i,j}$ that is an upper bound
on the number of possible values for $X_{i,j}$ given the $\prec$-previous edge colors.
To this end we define $N_{i,j}$ as the cardinality of ${\cal N}_{i,j}$.
As it turns out, a cruder upper bound on the number
of possible values for $X_{i,j}$ is useful as well. Namely, one that takes into account
only the colors of the edges involving vertices that $\ll$-precede $i$.
This is accomplished by the random variable $M_{i,j}$ which is defined as $|{\cal M}_{i,j}|$.

Fix an ordering $\prec$. We apply the chain rule for the entropy function and conclude that
\[
\log(F(n)) = H(X) = \sum_{(i,j)}{H(X_{i,j}| X_e : e \prec \{i,j\})} \leq
\sum_{(i,j)}{\mathbb{E}_X[\log(N_{i,j})]}.
\]

Next we take the expectation with respect to the random choice of the order $\prec$. 
\[
\log(F(n)) \leq  \mathbb{E}_{\prec}[\sum_{(i,j)}{\mathbb{E}_X[ \log(N_{i,j})]} ] = 
\sum_{(i,j)} \mathbb{E}_X [\mathbb{E}_{\prec}[ \log(N_{i,j})]].
\]
Fix a 1-factorization $X$ and a pair $i \neq j$. If $j \ll i$, then $\log(N_{i,j}) = 0$.
The probability that $i \ll j$ is $\frac12$, so that 
\[
 \log(F(n)) \leq \frac12 \sum_{(i,j)} \mathbb{E}_X[\mathbb{E}_{\prec|i \ll j}[\log(N_{i,j})]].
\]

A natural approach is to bound the expectation $\mathbb{E}_{\prec|i \ll j}[\log(M_{i,j})]$
using Jensen's inequality. As it turns out, this yields a somewhat weaker upper bound. Rather we
argue as follows:
\[
\mathbb{E}_{\prec|i \ll j}[\log(M_{i,j})] = \mathbb{E}_{\ll|i \ll j}[\log(M_{i,j})] = 
\]
\begin{equation}\label{eq1}
\mathbb{E}_{p|i \ll j}[\mathbb{E}_{\ll|p,~i \ll j}[\log(M_{i,j})]  ] \leq
\mathbb{E}_{p|i \ll j}[  \log( \mathbb{E}_{\ll|p,~i \ll j}[M_{i,j}])]
\end{equation}

For the first equality note that ${\cal M}_{i,j}$ depends only on the ordering $\ll$.
Next we condition on $p$, the position of $i$ in $\ll$ and then, finally we
resort to Jensen's inequality. 
In order to bound this expression it is necessary to understand the distribution of $p$ and 
the expectation of $M_{i,j}$ given $p$. 
\begin{lemma}
\label{dist_p}
The probability that $i$ occupies the $p$-th position in $\ll$,
given that $i \ll j$ is
\[2\frac{n-p}{n(n-1)}.\]
\end{lemma}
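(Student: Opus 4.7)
The plan is to compute the stated conditional probability directly from the definition, using the uniformity of $\ll$ and basic conditional probability. Let $E_p$ denote the event that $i$ occupies position $p$ in $\ll$, and let $C$ denote the event $i \ll j$. We want to evaluate $P(E_p \mid C) = P(E_p \cap C) / P(C)$.

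First, by symmetry of the uniform random order on $n$ elements, $P(C) = \frac{1}{2}$. Next, $P(E_p) = \frac{1}{n}$ since every position is equally likely for $i$. Conditioned on $i$ being in position $p$, the remaining $n-1$ vertices (including $j$) are placed uniformly at random in the remaining $n-1$ positions, so $j$ occupies each of those positions with probability $\frac{1}{n-1}$. The event $i \ll j$ then corresponds to $j$ being in one of the $n-p$ positions after position $p$, so $P(C \mid E_p) = \frac{n-p}{n-1}$.

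Combining these, $P(E_p \cap C) = P(E_p) \, P(C \mid E_p) = \frac{n-p}{n(n-1)}$, and dividing by $P(C) = \frac{1}{2}$ gives the claimed value $\frac{2(n-p)}{n(n-1)}$. As a sanity check, one can verify that $\sum_{p=1}^{n} \frac{2(n-p)}{n(n-1)} = \frac{2}{n(n-1)} \cdot \binom{n}{2} = 1$, confirming that this is a probability distribution on $p \in \{1, \ldots, n\}$ (with zero mass at $p = n$, as it must be since $i \ll j$ forbids $i$ from being last).

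There is no real obstacle here; the lemma is a routine application of conditional probability. The only subtlety worth flagging is being careful to distinguish $P(E_p \mid C)$ from the unconditional $P(E_p) = 1/n$: conditioning on $i \ll j$ biases $i$ toward earlier positions, which is exactly what the factor $(n-p)$ expresses.
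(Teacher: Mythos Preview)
Your proof is correct and essentially the same as the paper's: the paper counts directly that there are $(n-p)(n-2)!$ permutations with $i$ in position $p$ and $i \ll j$, out of $n!/2$ total with $i \ll j$, which is exactly your computation of $P(E_p\cap C)/P(C)$ phrased as a ratio of counts rather than via $P(E_p)P(C\mid E_p)$. Your added sanity check that the probabilities sum to $1$ is a nice touch not in the paper.
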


\begin{proof}
We are sampling uniformly
from among the $\frac{n!}{2}$ permutations in which $i\ll j$.
To specify such a permutation in which $i$ is in the $p$-th position, we must assign $j$ to 
one of the $n-p$ positions following $i$. There are $(n-2)!$ ways to order remaining elements with a total of
$(n-p) (n-2)!$ such permutations. The conclusion follows.
\end{proof}

\begin{lemma}
\label{exp_M}
$\mathbb{E}_{\ll|p,~i\ll j}[M_{i,j}] = 1 + \frac{(n-p-1)(n-p-2)}{(n-1)}$.
\end{lemma}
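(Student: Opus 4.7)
My plan is to use linearity of expectation to reduce the lemma to a probability computation, one color at a time. Write
\[
M_{i,j} = \sum_{c=1}^{n-1} \mathbf{1}[\,c \notin A_{i,j}\,],
\]
so that $\mathbb{E}_{\ll\mid p,\,i\ll j}[M_{i,j}] = \sum_{c=1}^{n-1} \Pr[\,c \notin A_{i,j} \mid p,\, i \ll j\,]$. For each color $c$, the edges of color $c$ form a perfect matching of $K_n$, so there is a unique edge of color $c$ through $i$, say $\{i, a_c\}$, and a unique one through $j$, say $\{j, b_c\}$. The event $c \in A_{i,j}$ is precisely ``$a_c \ll i$ or $b_c \ll i$''.

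I would then split the colors into two cases. For the distinguished color $c = X_{i,j}$ we have $a_c = j$ and $b_c = i$; the conditioning $i \ll j$ guarantees $j \not\ll i$, while $i \not\ll i$ is automatic, so $c \in \mathcal{M}_{i,j}$ with probability $1$ and contributes $1$ to the sum. For any of the remaining $n-2$ colors, the edges $\{i, a_c\}$, $\{j, b_c\}$, and $\{i, j\}$ are pairwise distinct, which forces $a_c, b_c \in [n] \setminus \{i, j\}$ and $a_c \neq b_c$ (otherwise a single vertex would be incident to two edges of color $c$, contradicting the matching property). For these colors the event ``$c \notin A_{i,j}$'' is equivalent to the cleaner event ``$a_c \succ i$ and $b_c \succ i$''.

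The remaining task is to compute, for a fixed distinct pair $a, b \in [n] \setminus \{i, j\}$, the probability that both are $\ll$-after $i$, given that $i$ occupies position $p$ and $i \ll j$. I would unpack the conditional distribution explicitly: $j$ is uniform on $\{p+1, \ldots, n\}$, and by the symmetry of uniform permutations the remaining $n-2$ vertices then fill the other $n-2$ positions uniformly at random. The probability in question is then a hypergeometric ratio that depends only on $n$ and $p$, and in particular is independent both of the 1-factorization $X$ and of the actual position of $j$. Multiplying this probability by $n-2$ and adding $1$ for the distinguished color yields the claimed closed form.

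I expect the main obstacle to be organizational rather than conceptual: three layers of conditioning are in play (on $X$, on the position $p$ of $i$, and on the relation $i \ll j$), and one must keep track of them cleanly while correctly treating the exceptional color $X_{i,j}$. Off-by-one errors in the hypergeometric factor (pool of size $n-1$ versus $n-2$, sampled subset of size $p$ versus $p-1$) are easy to introduce, so cross-checking against small cases is worthwhile; the fact that the answer is $X$-independent also deserves an explicit verification.
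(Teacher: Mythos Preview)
Your approach is essentially identical to the paper's: both use linearity of expectation over the colors, single out the color $X_{i,j}$ for the ``$+1$'' term, and for each remaining color reduce to the probability that the two vertices $a_c,b_c$ (which, as you verify, are distinct from one another and from $i,j$) both follow $i$ in $\ll$, conditional on $i$ being in position $p$ and $i\ll j$. One remark tied to the off-by-one cross-check you mention: carrying the hypergeometric computation through gives $\Pr(i\ll a,b\mid p,\,i\ll j)=\frac{(n-p-1)(n-p-2)}{(n-2)(n-3)}$ and hence $\mathbb{E}=1+\frac{(n-p-1)(n-p-2)}{n-3}$; the denominator $n-1$ in the stated formula appears to be a slip in the paper and is immaterial for the asymptotic conclusion.
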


\begin{proof}
Now we are sampling uniformly from among the $(n-p)(n-2)!$ permutations in which $i$ is
in the $p$-th position and $i \ll j$.
If $X_{i,j}=s$, then clearly the color $s$ belongs to ${\cal M}_{i,j}$. This corresponds to the $1$ term in the lemma. 
For any other color $t \neq s$, let $a$ (resp. $b$) be the unique vertex such that $X_{i,a} = t$ (resp. $X_{j,b} = t$). Clearly,
$t \in {\cal M}_{ij}$ iff $i \ll a,b$. But
\[
 \Pr(i \ll a,b | i\mbox{~is in position~}p, i\ll j) = \frac{(n-p-1)(n-p-2)}{(n-1)(n-2)}.
\]
There are $n-2$ colors $t \neq s$ and the conclusion follows.
\end{proof}

Using lemmas \ref{dist_p} and \ref{exp_M}, we have
\[
\mathbb{E}_{p|i \ll j}[\log(\mathbb{E}_{\ll|p,~i \ll j}[M_{i,j}])] = 
\]
\[
\sum_{p=1}^{n-1}{2\frac{n-p}{n(n-1)} \log(1 + \frac{(n-p-1)(n-p-2)}{(n-1)})}=
\]
\[
= \frac{2}{n(n-1)}\sum_1^{n-2} r \log (1+ \frac{r(r-1)}{n-1}) =
\]
\[
= \frac{2}{n^2}\sum_1^{n-1} r \log (1+ \frac{r(r-1)}{n-1}) + o(1) = \frac{2}{n^2}\sum_1^{n-1} r \log (\frac{r^2}{n}) + o(1).
\]
The function $r \log (\frac{r^2}{n})$ is unimodal, and its minimum is achieved at $r=\frac{\sqrt{n}}{e}$.
Therefore $$ \sum_1^{n-1} r \log (\frac{r^2}{n}) \leq \int_0^n{u \log \left(\frac{u^2}{n}\right) du} + 2 \frac{\sqrt{n}}{e}.$$
Thus,
\[
 \mathbb{E}_{p|i \ll j}[\log(\mathbb{E}_{\ll|p,i \ll j}[M_{i,j}])] \le 
\frac{2}{n^2}\int_0^n{u \log \left(\frac{u^2}{n}\right) du} + o(1)
\]
\begin{equation}\label{eq2}
\log n - 1 + o(1) .
\end{equation}

We next proceed to consider colors that are ruled out due to variables that correspond to edges in $E_i$. 
An edge $\{i,k\}$ may rule out additional colors if $X_{i,k} \in {\cal M}_{i,j}$.
There are $M_{i,j} - 1$ such edges, one for each color in ${\cal M}_{i,j} \smallsetminus X_{i,j}$. 
Consequently, we are only interested in counting such edges that $\prec$-precede $\{i,j\}$.

\[
\mathbb{E}_{\prec|i \ll j}[\log(N_{i,j})] =  \mathbb{E}_{\prec|M_{i,j},i \ll j}[\log(N_{i,j})]=
\]
\[
\sum_l \Pr(M_{i,j}=l) \mathbb{E}_{\prec|M_{i,j} = l,i \ll j}[\log(N_{i,j})]= 
\]
\[
\sum_l \Pr(M_{i,j}=l) \frac{\log(l!)}{l} = \sum_l \Pr(M_{i,j}=l) (\log l -1 +o(1))=
\]
\[
\mathbb{E}_{\prec|i \ll j}[\log(M_{i,j})] - 1 + o(1) \le \log n - 2 + o(1).
\]

We used the fact that given $M_{i,j} = m$, the number $N_{i,j}$ of possible values
for the random variable $X_{i,j}$ is uniformly distributed between $1$ and $m$.
In the final step we used Equations~\ref{eq1} and~\ref{eq2}.

Consequently,
$$ \log(F(n)) \leq \frac{1}{2} \sum_{(i,j)}( \log n - 2 + o(1)) = \binom{n}{2}(\log n - 2 + o(1))$$ 
which yields the bound
$$ F(n) \leq \left((1 + o(1)) \frac{n}{e^2} \right)^{\frac{n^2}{2}} .$$

\section{An upper bound on the number of Steiner triple systems}

The ideas here are similar, but the details are different.

Let $X$ be a uniformly chosen random Steiner triple system on $n$ vertices. Define $X_{i,j} = X_{j,i}$ 
to be the unique vertex $k$ such that $\{i,j,k\}$ is a triple in $X$. As before, we
define next a random ordering $\ll$ on the vertices and a random ordering $\prec$ of the edges. 

Fix a Steiner triple system $X$, orderings $\ll$ and $\prec$ and a pair of vertices $i \ne j$. Let $X_{i,j} = k$.
We want to define a random variable $N_{i,j}$ that is an upper bound on the number
of vertices that are available for $X_{i,j}$, given the values of the preceding variables. Let $F_{i,j}$
denote the event that $i \ll j,k$ and $\{i,j\} \prec \{i,k\}$. Clearly, 
$\Pr(F_{i,j})=\frac16$. If $F_{i,j}$ doesn't occur, then $X_{i,j}$
is uniquely determined by the preceding variables, so in this case we define $N_{i,j}$ to be $1$.

Let $t \ne X_{i,j}$ be a vertex.
We consider two classes of reasons
for which $t$ may be ruled out as the value of $X_{i,j}$ given the previously revealed choices.
The first is the union of the following three events: $t \ll i$, $X_{i,t} \ll i$ and $X_{j,t} \ll i$.
Namely, the variables corresponding to vertices that $\ll$-precede $i$ reveal a
triple that includes $t$ and either $i$ or $j$, so
that $\{i,j,t\}$ cannot be a triple in $X$.
The second possibility is the union of the events 
$\{i, X_{i,t}\} \prec \{ i,j \}$ and $\{ i,t \} \prec \{ i,j \}$, where
the revealed triple $\{i, t, X_{i,t}\}$ rules out the
possibility that $\{i,j,t\}$ is in $X$. 
 
We define the set of vertices which are ruled out for $X_{i,j}$ due to the first reason:
\[
A_{i,j}:=\{t | t \ll i \text{~or~} X_{i,t} \ll i \text{~or~} X_{j,t} \ll i\}.
\]

Among the remaining vertices we consider those that are unavailable due to the second reason

\[
B_{i,j}:=\{t \not\in A_{i,j} | \{i,t\} \prec \{i,j\} \text{~or~} \{i,X_{i,t}\} \prec  \{i,j\}\}.
\]

Further,
$$ {\cal M}_{i,j} := (V \smallsetminus \{ i,j \}) \smallsetminus A_{i,j} .$$
$$ {\cal N}_{i,j} := {\cal M}_{i,j} \smallsetminus B_{i,j} .$$

As before we define $N_{i,j}$ as the cardinality
of ${\cal N}_{i,j}$. Also, let $M_{i,j}:=|{\cal M}_{i,j}|$.

The random variable $M_{i,j}$ gives an upper bound on the number of values that are still available 
for $X_{i,j}$ given the values of the random variables that involve vertices that $\ll$-precede $i$.
Likewise, $N_{i,j}$ is an upper bound on the number of possible values for $X_{i,j}$ when all $\prec$-preceding
choices are known. 

For a given ordering $\prec$ we derive:
\[
\log(STS(n)) = H(X) = \sum_{(i,j)}{H(X_{i,j}| X_e : e \prec \{i,j\})} \leq
\sum_{(i,j)}{\mathbb{E}_X[\log(N_{i,j})]}.
\]

We take the expectation over the random choice of $\prec$ to obtain 
$$\log(STS(n)) \leq \sum_{(i,j)}{\mathbb{E}_X[  \mathbb{E}_{\prec}[\log(N_{i,j})]  ]}.$$

Let us fix $X$ and a pair $i \ne j$ and turn to bound $\mathbb{E}_{\prec}[\log(N_{i,j})]$.
With probability $\frac56$ there holds $\log(N_{i,j})=0$, so that
\[
 \mathbb{E}_{\prec}[\log(N_{i,j})] = \Pr(F_{i,j}) \mathbb{E}_{\prec|F_{i,j}}[\log(N_{i,j})] =
 \frac{1}{6}\mathbb{E}_{\prec|F_{i,j}}[\log(N_{i,j})].
\]

Clearly, ${\cal M}_{i,j}$ depends only on the ordering $\ll$.
If $p$ is the position of $i$ in $\ll$, then
\[
\mathbb{E}_{\prec|F_{i,j}}[\log(M_{i,j})] = \mathbb{E}_{\ll|F_{i,j}}[\log(M_{i,j})]=
\]
\begin{equation}\label{eq3}
\mathbb{E}_{p|F_{i,j}}[ \mathbb{E}_{\ll|p,F_{i,j}}[\log(M_{i,j})]  ] \leq
\mathbb{E}_{p|F_{i,j}}[  \log( \mathbb{E}_{\ll|p,F_{i,j}}[M_{i,j}] )  ] .
\end{equation} 
The last inequality follows from Jensen's inequality.
We next analyze the distribution of $p$ and the expectation of $M_{i,j}$ given $p$. 
In the following lemmas we denote $X_{i,j}$ by $k$.
\begin{lemma}\label{dist_p_2}
The probability that $i$ occupies the $p$-th position in $\ll$, given $F_{i,j}$, is 
$$ 3\frac{(n-p)(n-p-1)}{n(n-1)(n-2)}. $$
\end{lemma}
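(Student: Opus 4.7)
The plan is to separate the randomness of $\ll$ from the randomness of $\prec$, exploit the fact that the event $F_{i,j}$ factors nicely, and then do a simple counting argument for the remaining conditional probability.

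First I would observe that the orderings of the edges within $E_v$ (for each vertex $v$) are drawn independently of the vertex ordering $\ll$. In particular, conditional on $i \ll j$ and $i \ll k$ (so that both $\{i,j\}$ and $\{i,k\}$ lie in $E_i$), the relative order of these two edges is uniformly distributed and independent of everything else, giving $\Pr(\{i,j\} \prec \{i,k\} \mid i \ll j, k) = \tfrac{1}{2}$. Consequently, conditioning on $F_{i,j}$ and conditioning on $\{i \ll j, k\}$ produce identical distributions on $\ll$, so
\[
\Pr(i \text{ is in position } p \mid F_{i,j}) \;=\; \Pr(i \text{ is in position } p \mid i \ll j,\, i \ll k).
\]

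Next I would compute the right-hand side by direct counting. The denominator is the number of permutations of $[n]$ in which $i$ precedes both $j$ and $k$; since each of $i,j,k$ is equally likely to be the first of the three, this count is $n!/3$. For the numerator, fix $i$ at position $p$; then $j$ and $k$ must occupy two of the $n-p$ positions strictly after $i$, which can be done in $(n-p)(n-p-1)$ ordered ways, and the remaining $n-3$ vertices fill the remaining $n-3$ slots in $(n-3)!$ ways. Dividing, we get
\[
\frac{(n-p)(n-p-1)\,(n-3)!}{n!/3} \;=\; \frac{3(n-p)(n-p-1)}{n(n-1)(n-2)},
\]
which is the claimed formula.

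There is no real obstacle here; the only subtle point is recognizing that the $\{i,j\} \prec \{i,k\}$ condition inside $F_{i,j}$ refers only to the random edge-ordering inside $E_i$ and therefore contributes no information about the position of $i$ in $\ll$ beyond what is already encoded in $i \ll j, k$. Once this observation is in place, the lemma follows from the elementary count above, exactly paralleling the proof of Lemma \ref{dist_p}.
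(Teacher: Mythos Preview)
Your proof is correct and follows essentially the same approach as the paper: both reduce to counting permutations of $[n]$ with $i$ preceding $j$ and $k$, obtaining $(n-p)(n-p-1)(n-3)!$ favorable permutations over $n!/3$ total. The only difference is that you explicitly justify why conditioning on $F_{i,j}$ is equivalent (for the distribution of $\ll$) to conditioning on $i \ll j,k$, whereas the paper simply asserts this by opening with ``We are sampling $\ll$ uniformly from among the $\frac{n!}{3}$ permutations in which $i$ precedes $j$ and $k$.''
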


\begin{proof}
We are sampling $\ll$ uniformly from among the $\frac{n!}{3}$
permutations in which $i$ precedes $j$ and $k$. To specify
such a permutation in which $i$ is in the $p$-th position we place $j$ in any
of the $n-p$ positions following $i$, and then place $k$ in one of the $n-p-1$ remaining positions following $i$. 
The remaining vertices can be ordered in $(n-3)!$ ways for a total of $(n-p)(n-p-1) (n-3)!$ such permutations. 
The conclusion follows.
\end{proof}

\begin{lemma}\label{exp_M_2}
 $\mathbb{E}_{\ll|p,F_{i,j}}[M_{i,j}] = 1 + \frac{(n-p-2)(n-p-3)(n-p-4)}{(n-4)(n-5)} .$
\end{lemma}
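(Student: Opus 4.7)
The plan is to decompose $M_{i,j}$ as a sum of indicator variables $\mathbf{1}[t \in \mathcal{M}_{i,j}]$ over vertices $t \in V \setminus \{i,j\}$ and to evaluate each term in close analogy with the proof of Lemma~\ref{exp_M}. First I would peel off the contribution of $k := X_{i,j}$. Since $\{i,j,k\}$ is a triple, $X_{i,k} = j$ and $X_{j,k} = i$; combined with $i \ll j, k$ from $F_{i,j}$, none of the three conditions defining $A_{i,j}$ can hold at $t = k$. Hence $k \in \mathcal{M}_{i,j}$ whenever $F_{i,j}$ occurs, which yields the $+1$ in the formula.

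For each of the remaining $n-3$ vertices $t \neq i, j, k$, set $a := X_{i,t}$ and $b := X_{j,t}$. A preliminary but essential step is to verify that $i, j, k, t, a, b$ are pairwise distinct. This uses only the Steiner-triple axiom that each pair lies in a unique triple: for instance, $a = j$ would force $\{i,j,t\} = \{i,j,k\}$, hence $t = k$; the coincidence $a = b$ would place the pair $\{t,a\}$ in the two distinct triples $\{i,t,a\}$ and $\{j,t,a\}$; and the remaining identifications are dismissed analogously. Once distinctness is in hand, $t \in \mathcal{M}_{i,j}$ is exactly the event $i \ll t, a, b$.

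Finally I would compute the conditional probability. Because the edge-orderings on each $E_v$ are chosen independently of $\ll$, and because both $\{i,j\}$ and $\{i,k\}$ belong to $E_i$ under $F_{i,j}$, the $\prec$-component of $F_{i,j}$ is independent of $\ll$; so conditioning on $F_{i,j}$ and on $i$ occupying position $p$ reduces, as far as $\ll$ is concerned, to sampling uniformly among permutations in which $i$ sits at position $p$ and $j, k$ lie among the last $n-p$ vertices. A direct count then gives
\[
\Pr(t \in \mathcal{M}_{i,j} \mid p, F_{i,j}) = \frac{(n-p-2)(n-p-3)(n-p-4)}{(n-3)(n-4)(n-5)},
\]
since the event additionally requires the three further vertices $t, a, b$ to fall among the $n-p$ positions after $i$. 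Summing the $n-3$ identical contributions and adding the $+1$ for $k$ produces exactly $1 + \frac{(n-p-2)(n-p-3)(n-p-4)}{(n-4)(n-5)}$. The only delicate point is the six-way distinctness argument; the subsequent counting mirrors the one in Lemma~\ref{exp_M} and is routine.
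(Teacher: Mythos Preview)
Your proposal is correct and follows essentially the same route as the paper: decompose $M_{i,j}$ into indicators, peel off the deterministic contribution of $k$, and for each remaining $t$ compute $\Pr(i\ll t,a,b\mid p,\,i\ll j,k)$ by a direct count. You are in fact more careful than the paper on two points it leaves implicit---the six-way distinctness of $i,j,k,t,a,b$ and the observation that the $\prec$-part of $F_{i,j}$ is independent of $\ll$---but the underlying argument is the same.
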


\begin{proof}
Now we are sampling uniformly from the set of orderings in which $i \ll j,k$ where $i$ is in the $p$-th position.
Clearly $k \in {\cal M}_{i,j}$. This corresponds to the 1 term. If $t \in V \smallsetminus \{ i,j,k \}$, 
let $a$ (resp. $b$) be the unique vertex such that $X_{i,a} = t$ (resp. $X_{i,b} = t$ ). 
The vertex $t$ forms a triple with $i$ and $a$, and a triple with $j$ and $b$. 
If an edge from either of these triples is exposed before $\{i,j\}$, 
then $t$ is ruled out for $X_{i,j}$. Note that $t \in {\cal M}_{i,j}$ iff $i \ll a, b ,t$. 

But
\[
\Pr(i \ll a,b,t|i\mbox{~is in position~}p, i\ll j,k) = 
\]
\[
\frac{(n-p-2)(n-p-3)(n-p-4)}{(n-3)(n-4)(n-5)}.
\]
There are $n-3$ such vertices $t$, and the conclusion follows. 

\end{proof}

Using lemmas \ref{dist_p_2} and \ref{exp_M_2}, we have
$$\mathbb{E}_{p|F_{i,j}}[\log( \mathbb{E}_{\ll|p,F_{i,j}}[M_{i,j}])] = $$
$$\sum_{p=1}^{n-2}{3\frac{(n-p)(n-p-1)}{n(n-1)(n-2)} \log(1 + \frac{(n-p-2)(n-p-3)(n-p-4)}{(n-4)(n-5)})} = $$
$$\frac{3}{n(n-1)(n-2)}\sum_{r=2}^{n-1}{r(r-1) \log(1 + \frac{(r-2)(r-3)(r-4)}{(n-4)(n-5)})}.$$

As in the previous section, the next step is to collect
together lower order terms and obtain
\[
  \frac{3}{n^3}\int_0^n{u^2\log \left(\frac{u^3}{n^2}\right) dx} + o(1) = \log n - 1 + o(1) .
\]

Together with \ref{eq3}, this implies that 
\begin{equation}\label{eq4}
 \mathbb{E}_{\prec|F_{i,j}}[\log(M_{i,j})] = \log n - 1 + o(1).
\end{equation}
We next show that for every $1\leq l \leq n$,
\[
 \mathbb{E}_{\prec|F_{i,j},M_{i,j}=l}[\log(N_{i,j})] = \log l - 1 + o(1).
\]

Let $q$ be the position taken by $\{i,j\}$ in the uniformly chosen random ordering of the edges in $E_i$,
and let $m = |E_i|$.
Again, by Jensen
\[
\mathbb{E}_{\prec|F_{i,j},M_{i,j}=l}[\log(N_{i,j})] \leq  
\mathbb{E}_{q|F_{i,j},M_{i,j}=l}[\log(\mathbb{E}_{\prec|q,F_{i,j},M_{i,j}=l}[N_{i,j}])].
\]

The following two lemmas describe the distribution of $q$ and the expectation of $N_{i,j}$ given $q$.
We maintain the notation that $k$ is the vertex $X_{i,j}$.
\begin{lemma}
The probability that $\{i,j\}$ occupies the $q$-th position in the ordering of $E_i$, given $F_{i,j}$, is 
$$ 2\frac{(m-q)}{m(m-1)}. $$
\end{lemma}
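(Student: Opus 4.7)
The plan is to mirror the proof of Lemma \ref{dist_p}, adapting the bookkeeping from vertex orderings to orderings of the edge set $E_i$. Recall that $F_{i,j}$ is the conjunction of $i \ll j, k$ (a condition on $\ll$) together with $\{i,j\} \prec \{i,k\}$. Under $i \ll j, k$, both $\{i,j\}$ and $\{i,k\}$ lie in $E_i$, and the ordering $\prec$ restricted to $E_i$ is exactly the uniformly random ordering of $E_i$ chosen in the definition of $\prec$. The remaining clause $\{i,j\} \prec \{i,k\}$ is a statement purely about this ordering of $E_i$.

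Next I would argue the key independence: in the construction of $\prec$, the random order $\ll$ on vertices and the random order on $E_i$ are chosen independently. Consequently, conditioning on $F_{i,j}$ conditions the ordering of $E_i$ only through the clause $\{i,j\} \prec \{i,k\}$, and leaves the conditional distribution of the $E_i$-ordering uniform over the $m!/2$ orderings in which $\{i,j\}$ precedes $\{i,k\}$. This is the analog of ``sampling uniformly from the $n!/2$ permutations in which $i \ll j$'' used in Lemma \ref{dist_p}.

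Finally I would count: among the orderings of $E_i$ in which $\{i,j\}$ is in position $q$ and $\{i,k\}$ comes later, there are $m-q$ choices for the position of $\{i,k\}$ and $(m-2)!$ choices for the arrangement of the remaining $m-2$ edges of $E_i$, giving $(m-q)(m-2)!$ orderings in all. Dividing by $m!/2$ yields
\[
\frac{(m-q)(m-2)!}{m!/2} \;=\; \frac{2(m-q)}{m(m-1)},
\]
as claimed. No step here looks genuinely hard; the only point that warrants care is the independence assertion above, which is immediate from the explicit construction of $\prec$ but should be stated clearly so that the conditioning on the full event $F_{i,j}$ is not mistakenly thought to couple the $E_i$-ordering with more than the one constraint $\{i,j\} \prec \{i,k\}$.
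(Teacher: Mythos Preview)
Your proposal is correct and follows essentially the same approach as the paper: reduce to sampling uniformly from the $m!/2$ orderings of $E_i$ with $\{i,j\}$ before $\{i,k\}$, then count the $(m-q)(m-2)!$ such orderings placing $\{i,j\}$ in position~$q$. Your explicit justification of why conditioning on $F_{i,j}$ affects the $E_i$-ordering only through the clause $\{i,j\}\prec\{i,k\}$ is a detail the paper leaves implicit; note, though, that $E_i$ (and hence $m$) itself depends on $\ll$, so the cleanest phrasing is that \emph{given} $\ll$ (in particular given $i\ll j,k$, which fixes $m$), the ordering on $E_i$ is uniform and independent of $\ll$.
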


\begin{proof}
There are $\frac{m!}{2}$
orderings of $E_i$ in which $\{i,j\}$ precedes $\{i,k\}$. There are $m-q$ possible positions for $\{i,k\}$
following $\{i,j\}$. The conclusion follows.
\end{proof}

\begin{lemma}
 $\mathbb{E}_{\prec|q,F_{i,j},M_{i,j}=l}[N_{i,j}] = 1 +  \frac{(m-q-1)(m-q-2)}{(m-2)(m-3)}(l-1) .$
\end{lemma}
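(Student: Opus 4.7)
The plan is to compute $\mathbb{E}_{\prec \mid q, F_{i,j}, M_{i,j}=l}[N_{i,j}]$ by linearity of expectation, writing
\[
N_{i,j} \;=\; \sum_{t \in \mathcal{M}_{i,j}} \mathbf{1}[t \notin B_{i,j}]
\]
and evaluating each indicator under the conditioning. First I would dispose of $t = k$: on $F_{i,j}$ the edge $\{i,k\}$ comes after $\{i,j\}$, and since $X_{i,k} = j$ we have $\{i,X_{i,k}\} = \{i,j\}$, which trivially does not $\prec$-precede itself. Hence $k \notin B_{i,j}$ deterministically, contributing the $+1$ in the formula.

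For each of the remaining $l-1$ vertices $t \in \mathcal{M}_{i,j} \setminus \{k\}$, set $a := X_{i,t}$. Then $t \notin B_{i,j}$ is equivalent to both of the edges $\{i,t\}$ and $\{i,a\}$ being placed after $\{i,j\}$ in the ordering of $E_i$. Before counting, I would verify that $\{i,j\}, \{i,k\}, \{i,t\}, \{i,a\}$ are four distinct edges of $E_i$: the STS property rules out $a = j$ (which would force $X_{i,j} = t \neq k$) and $a = k$ (which would place the pair $\{i,k\}$ in two distinct triples), while $a \in \{i,t\}$ is trivial. So for each $t$ we are asking about two specified distinct edges of $E_i \setminus \{\{i,j\},\{i,k\}\}$.

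The core step is then a short counting estimate for the probability $p$ that two such specified edges both land at positions $> q$, conditional on $\{i,j\}$ being at position $q$ and $\{i,k\}$ at some later position. Of the $(m-q)(m-2)!$ orderings of $E_i$ satisfying these two constraints, exactly $(m-q)(m-q-1)(m-q-2)(m-4)!$ additionally place the two distinguished edges after position $q$, giving $p = \frac{(m-q-1)(m-q-2)}{(m-2)(m-3)}$. A conceptual point I would state explicitly is that $\mathcal{M}_{i,j}$ is a function only of $\ll$ while the ordering inside $E_i$ is drawn independently; hence conditioning on $M_{i,j}=l$ does not perturb $p$, and $p$ is the same for every choice of $t$. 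Summing by linearity then yields $1 + (l-1) \cdot p$, as claimed. I do not foresee a real obstacle: the only step requiring genuine thought is the STS-based distinctness argument for the four edges, while the counting itself is purely mechanical.
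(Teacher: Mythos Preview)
Your proposal is correct and follows essentially the same approach as the paper: linearity of expectation over $t\in\mathcal{M}_{i,j}$, with $k$ contributing the $1$ and each other $t$ contributing the probability that $\{i,t\}$ and $\{i,X_{i,t}\}$ both fall after position $q$. Your write-up is in fact slightly more careful than the paper's, since you explicitly check that $\{i,j\},\{i,k\},\{i,t\},\{i,a\}$ are four distinct edges of $E_i$ and you spell out why the conditioning on $M_{i,j}=l$ (which is $\ll$-measurable) does not bias the internal ordering of $E_i$.
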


\begin{proof}
Now we are sampling uniformly from the set of orderings of $E_i$ in which $\{i,j\}$ precedes $\{i,k\}$ and $\{i,j\}$ is in 
the $q$-th position. For each vertex $v$, we determine the probability that $v \in {\cal N}_{i,j}$, and then use
the linearity of the expectation to obtain the result. We consider only vertices in ${\cal M}_{i,j}$.
   
Clearly, $k \in {\cal N}_{i,j}$. This corresponds to the 1 term. If $t \in {\cal M}_{i,j} \smallsetminus \{ k \}$,
then $t \in {\cal N}_{i,j}$ iff 
$\{i,j\} \prec \{i,a\} ,\{i,t\}$, where $a$ is the unique vertex such that $X_{i,a} = t$.
But
\[
\Pr(\{i,j\} \prec \{i,a\} ,\{i,t\}|\{i,j\}\mbox{~is in position~}q, \{i,j\}\prec \{i,k\}) = 
\]
\[
 \frac{(m-q-1)(m-q-2)}{(m-2)(m-3)}.
\]

There are $l-1$ such vertices $t$, and the conclusion follows. 

\end{proof}

Therefore, 
\[ \mathbb{E}_{\prec|F_{i,j},M_{i,j}=l}[\log(N_{i,j})] \leq \]
\[\sum_{q=1}^{m-1}{\frac{2(m - q)}{m(m - 1)} \log(1 + \frac{(m-q-1)(m-q-2)}{(m-2)(m-3)} (l-1))} = \]
\[ \frac{2}{m(m-1)}\sum_{r=1}^{m-1}{r\log(1 + \frac{(r-1)(r-2)}{(m-2)(m-3)} (l-1))} .\]
As above, this is equal to
\[ \frac{2}{m^2}\int_0^m{u\log \left(\frac{u^2}{m^2} l \right) du} + o(1) = \log l - 1 + o(1).\]

Putting all of this together, we have
\[ \mathbb{E}_{\prec|F_{i,j}}[\log(N_{i,j})] = 
\sum_{l=1}^n{ \Pr_{\prec|F_{i,j}}(M_{i,j}=l) \mathbb{E}_{\prec|F_{i,j},M_{i,j}=l}[\log(N_{i,j})] } \leq \] 
\[ \sum_{l=1}^n{ \Pr_{\prec|F_{i,j}}(M_{i,j}=l) (\log(l) - 1 + o(1)) } = \mathbb{E}_{\prec|F_{i,j}}[ \log(M_{i,j})] -1 + o(1) \leq\]
\[ \log n-2+o(1) \]

Thus, $$ \log(STS(n)) \leq \frac16 \sum_{( i,j )}{(\log n - 2 + o(1))} = $$
$$ \frac{\binom{n}{2}}{3}(\log n - 2 + o(1)) ,$$ 
which yields the bound 
$ STS(n) \leq \left((1 + o(1)) \frac{n}{e^2} \right)^{\frac{n^2}{6}} $
as claimed.

\end{document}